\documentclass[a4paper,12pt]{article}

\usepackage{amscd}
\usepackage{amsfonts}
\usepackage{amsmath}
\usepackage{amssymb}
\usepackage{amsthm}
\usepackage[T1]{fontenc} 
\usepackage{here} 
\usepackage{mathrsfs} 
\usepackage{txfonts} 
\usepackage[all]{xy} 
\usepackage{algorithm} 
\usepackage{algpseudocode} 
\usepackage{diagbox} 

\allowdisplaybreaks

\theoremstyle{plain}
\newtheorem{thm}{Theorem}[section]
\newtheorem{lmm}[thm]{Lemma}
\newtheorem{prp}[thm]{Proposition}
\newtheorem{crl}[thm]{Corollary}

\theoremstyle{definition}
\newtheorem{dfn}[thm]{Definition}

\newtheorem{exm}[thm]{Example}

\newcommand{\vs}[1][0.2]{\vspace{#1in}\noindent\ignorespaces}
\newcommand{\ba}{\begin{array*}}
\newcommand{\ea}{\end{array*}}
\newcommand{\be}{\begin{eqnarray*}}
\newcommand{\ee}{\end{eqnarray*}}
\newcommand{\bi}{\begin{itemize}}
\newcommand{\ei}{\end{itemize}}
\newcommand{\bb}{\vs\begin{itembox}}
\newcommand{\eb}{\end{itembox}}
\newcommand{\bc}{\begin{center}}
\newcommand{\ec}{\end{center}}
\newcommand{\bs}{\vs\begin{screen}}
\newcommand{\es}{\end{screen}}

\def\ens#1{{\mathchoice{\left\{ #1 \right\}}{\{ #1 \}}{\{ #1 \}}{\{ #1 \}}}}
\def\set#1#2{{\mathchoice{\left\{ #1 \ \middle| \ #2 \right\}}{\{ #1 \mid #2 \}}{\{ #1 \mid #2 \}}{\{ #1 \mid #2 \}}}}
\def\r#1{\text{\rm #1}}

\def\Bigv#1{\left| #1 \right|}
\def\v#1{{\mathchoice{\Bigv{#1}}{| #1 |}{| #1 |}{| #1 |}}}

\algnewcommand\algorithmicbreak{{\bf break}}
\algnewcommand\Break{\algorithmicbreak{}}
\algnewcommand\algorithmiccontinue{{\bf continue}}
\algnewcommand\Continue{\algorithmiccontinue{}}

\newcommand{\bN}{\mathbb{N}}

\newcommand{\bP}{\mathbb{P}}
\newcommand{\bQ}{\mathbb{Q}}
\newcommand{\bR}{\mathbb{R}}

\newcommand{\bZ}{\mathbb{Z}}

\newcommand{\cA}{\mathscr{A}}

\newcommand{\N}{\bN}
\newcommand{\Q}{\bQ}
\newcommand{\R}{\bR}
\newcommand{\Z}{\bZ}

\newcommand{\Fp}{\mathbb{F}_p}

\newcommand{\DF}{\r{DF}}
\newcommand{\REDC}{\r{REDC}}

\title{Transcendence of Montgomery Reduction Factor in the Ring of Integers Modulo Infinitely Large Primes}
\author{Tomoki Mihara}
\date{}

\begin{document}

\maketitle
\begin{abstract}
We prove the transcendence over $\mathbb{Q}$ of the image of Montgomery reduction factor $R'$ in the ring $\mathscr{A}$ of integers modulo infinitely large primes. Here, for an odd prime number $p$, $R'$ is defined as the modular inverse of a power $R$ of $2$ modulo $p$ satisfying $2^{-k} R \leq p < R$ for a fixed constant $k \in \mathbb{N}_{> 0}$ typically given as the standard bit size $32$ of an integer type, and is the element of $\mathbb{Z}/p \mathbb{Z}$ representing Montgomery reduction regarded as a $\mathbb{Z}/p \mathbb{Z}$-linear homomorphism.
\end{abstract}

\tableofcontents

\section{Introduction}
\label{Introduction}

Let $(N,R) \in \N_{> 0}^2$ with $N < R$ and $\gcd(N,R) = 1$. Montgomery reduction $\REDC$ (cf.\ \cite{Mon85}) is an algorithm which rapidly computes the multiplication by the modular inverse of $R$ modulo $N$. The significance of $\REDC$ is its explicit implementation using division by $R$ rather than $N$. Let $(N',R') \in \N_{< R} \times \N_{< N}$ with $RR' - NN' = 1$. Such a pair is found by Euclidean algorithm with division by $R$. Here is a pseudocode of $\REDC$ for $(N,R)$, which we denote by $\REDC_{N,R}$:

\begin{figure}[H]
\begin{algorithm}[H]
\caption{Montgomery reduction for $(N,R)$:}
\begin{algorithmic}[1]
\Function {REDC}{$i$}
	\State $m \gets (i \bmod R)N' \bmod R$
	\State $i' \gets (i + mN)/R$
	\If {$i' < N$}
		\State \Return $i$
	\Else
		\State \Return $i - N$
	\EndIf
\EndFunction
\end{algorithmic}
\end{algorithm}
\end{figure}

If $N$ is an odd number and $R$ is chosen as a power of $2$, then the division by $R$ is inexpensive to proceed in computers: the quotient is computed by the right bit shift operation, and the residue is computed by the bitwise AND operation with $R - 1$. Since naive computation of modular arithmetic modulo $N$ requires much machine operations, Montgomery reduction performs better in algorithms requiring extensive modular arithmetic. Although there are other algorithms for modular arithmetic, we focus only on Montgomery reduction in this study.

\vs
We denote by $\bP$ the set of prime numbers. The aim of this paper is to study the algebraic complexity of the Montgomery reduction factor $R'$ for the case $N$ is an odd prime number and $R$ is a power of $2$ satisfying $2^{-k} R \leq N < R$ for a fixed constant $k \in \N_{> 0}$ typically given as the standard bit size $32$ of an integer type, from a new point of view using the ring
\be
\cA \coloneqq \prod_{p \in \bP} \Fp \bigg/ \bigoplus_{p \in \bP} \Fp
\ee
of integers modulo infinitely large primes. In number theory, $\cA$ plays a role of the base ring in the study of finite multiple zeta values, which are finite analogues of multiple zeta values introduced by D.\ Zagier. Starting from \cite{KZ}, there have been various studies on specific elements of $\cA$. See \cite{Mih26-1} \S 1--2 for algebraic properties of $\cA$. We give a new transcendence result on $R'$:

\begin{thm}
Let $R' \colon \bP \to \N$. Suppose that there exists a $k \in \N_{> 0}$ such that for all but finitely many $p \in \bP$, $R'(p)$ is a modular inverse of some power $R$ of $r$ modulo $p$ with $2^{-k}R \leq p < R$, then the image of $R'$ in $\cA$ is transcendental over $\Q$.
\end{thm}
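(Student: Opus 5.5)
The plan is to argue by contradiction and reduce algebraicity of $R'$ in $\cA$ to a divisibility statement about a single nonzero integer that is too small to be divisible by all the primes of a given bit length. Throughout I read ``power $R$ of $r$'' as ``power of $2$'', as in the abstract.

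\textbf{Step 1: encode $R'(p)$ by an exponent.} For all but finitely many $p \in \bP$ write $R = 2^{n(p)}$. The condition $2^{-k}R \leq p < R$ says $2^{n(p)-k} \leq p < 2^{n(p)}$. Let $m(p)$ be the bit length of $p$, so that $2^{m(p)-1} \leq p < 2^{m(p)}$. Then $n(p) = m(p) + j(p)$ for some $j(p) \in \{0, \ldots, k-1\}$. Moreover $2^{n(p)} R'(p) \equiv 1 \pmod p$.

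\textbf{Step 2: pass to a polynomial in $2^{m}$.} Suppose the image of $R'$ is algebraic over $\Q$. Then there is a nonzero $f(x) = \sum_{i=0}^{d} a_i x^i \in \Z[x]$ with $f(R'(p)) \equiv 0 \pmod p$ for all but finitely many $p$. Multiplying by $2^{n(p)d}$ and using $2^{n(p)}R'(p) \equiv 1$ gives
\be
\sum_{i=0}^{d} a_i 2^{n(p)(d-i)} \equiv 0 \pmod p .
\ee
Set $F_j(x) \coloneqq \sum_{i=0}^{d} a_i 2^{j(d-i)} x^{d-i} \in \Z[x]$. Each $F_j$ is nonzero, since its coefficients are the $a_i$ times nonzero powers of $2$. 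The congruence above says $p \mid F_{j(p)}(2^{m(p)})$.

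Put $G \coloneqq \prod_{j=0}^{k-1} F_j$, a nonzero integer polynomial of degree at most $kd$. Then for all but finitely many $p$,
\be
p \mid G(2^{m(p)}).
\ee
The point is that $G$ depends only on $f$ and $k$, not on $p$.

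\textbf{Step 3: counting contradiction.} Since $G \neq 0$ has finitely many roots, $G(2^m) \neq 0$ for all large $m$, and $|G(2^m)| \leq C\, 2^{mkd}$ for a constant $C$. Fix a large $m$ and let $\pi_m$ be the number of primes in $[2^{m-1}, 2^m)$. For $m$ large, all of these primes lie outside the finite exceptional set, so they all divide the nonzero integer $G(2^m)$. Hence
\be
2^{(m-1)\pi_m} \leq \prod_{2^{m-1} \leq p < 2^m} p \leq |G(2^m)| \leq C\, 2^{mkd}.
\ee
This forces $\pi_m$ to be bounded, roughly $\pi_m \leq kd + 1$ for large $m$. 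But Chebyshev's estimate (or the prime number theorem) gives $\pi_m \gg 2^m/m \to \infty$, a contradiction. Therefore the image of $R'$ in $\cA$ is transcendental.

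The only delicate point is Step 2. The exponent $n(p)$ is not a fixed function of $p$: it can take any of $k$ values. Folding these possibilities into the single product $G$ is what turns the problem into a fixed nonzero integer $G(2^m)$ being divisible by every prime of bit length $m$. Once that is in place, Step 3 needs only a crude lower bound on the number of primes in dyadic intervals, so I expect no real obstacle.
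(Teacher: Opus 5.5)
Your proof is correct. It follows the paper's overall strategy but organises the prime count differently.

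\textbf{The paper's route.} Lemma~\ref{Montgomery expression} obtains naive transcendence of the image of $R=(B^{e(p)})_p$ from the general criterion Proposition~\ref{sparse} for $(S,k)$-rounding maps, with $S$ the set of powers of $B$. Theorem~\ref{Montgomery} then notes that the image of $R'$ is the inverse of that of $R$ in $\cA$. The polynomial reversal that justifies this inversion step is left implicit in the paper; your Step~2 supplies it. Inside Proposition~\ref{sparse} the count is cumulative. All primes up to $e_S(i-k)$ are compared with the total number, $O(i\log e_S(i))$, of prime factors of the nonzero integers $F(e_S(i'))$ for $i'\le i$. The growth hypothesis and the prime number theorem then give the contradiction.

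\textbf{Your route.} You absorb the $k$ possible exponents into the single polynomial $G=\prod_{j<k}F_j$, using $2^{m+j}=2^j\cdot 2^m$. Then one nonzero integer $G(2^m)$ of size $2^{O(m)}$ is divisible by every prime of bit length $m$, and you count block by block.

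\textbf{What each buys.} Your argument is short, self-contained and needs very little analytic input. Bounded $\pi_m$ is already absurd by the prime number theorem or Chebyshev. Even Euler's divergence of $\sum_p 1/p$ would suffice, since $\sum_p 1/p\le\sum_m \pi_m 2^{1-m}$. The paper's route is more general: it covers any sparse $S$ with $x\log e_S(x)\log e_S(x-k)\in o(e_S(x-k))$, such as the perfect powers in its examples. For such denser sets, a literal block-by-block count would need many primes in the short intervals between consecutive elements of $S$, which is a Legendre-type problem. One must instead sum over blocks, and that is exactly the cumulative count of Proposition~\ref{sparse}.
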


We will show a slightly stronger result in Theorem \ref{Montgomery}. As a consequence, we obtain that the map $\bP \to \N$ assigning to each $p \in \bP$ the bit flip of $p$ defines an element of $\cA$ transcendental over $\Q$ (cf.\ Corollary \ref{digit flip} and the paragraph after the proof).

\vs
We explain motivation and preceding studies. It is well-known (cf.\ \cite{Rob63} (v)) that the image of Fibonacci sequence $(F_n)_{n=1}^{\infty}$ in $\cA$ coincides with that of the Dirichlet character associated to Legendre symbol $(\frac{\cdot}{5})$, and hence is a zero of $x^2 - 1 \in \Q[x]$ in $\cA$. J.\ Rosen generalised this fact in \cite{Ros18} Theorem 1.4 by characterising the subset of $\cA$ consisting of the images of $\Q$-linear recurrent sequences in \cite{Ros18} Theorem 1.1. As a result, the image in $\cA$ of a $\Q$-linear recurrent sequence can never be transcendental over $\Q$.

\vs
As an extension of \cite{Ros18} Theorem 1.4, we proved in \cite{Mih26-2} Theorem 2.6 that the image in $\cA$ of the composite of a $\Q$-linear recurrent sequence and a polynomial function over $\N$ can never be transcendental over $\Q$. On the other hand, we proved in \cite{Mih26-2} Theorem 3.10 that a non-trivial double exponential function defines an element of $\cA$ transcendental over $\Q$ under a number theoretic conjecture. This implies that the image in $\cA$ of the composite of two $\N$-linear recurrent sequences can be transcendental over $\Q$ under the same conjecture.

\vs
Through the observation, we regarded transcendence in $\cA$ over $\Q$ as a new algebraic measure of complexity of an arithmetic algorithm independent of the implementation. For example, if we consider an algorithm to compute a square root of a fixed constant $n \in \Z$ modulo $p \in \bP$ when $n$ is a quadratic residue modulo $p$ and to return a constant $c$ otherwise, then the algorithm defines $f \in \cA$ satisfying the algebraic relation $(f^2 - n)(f - c) = 0$ over $\Q$. In particular, Tonelli--Shanks algorithm is used to compute a square root of $n$ modulo $p \in \bP$ as the product of the main term $n^{\frac{k_p + 1}{2}}$ modulo $p$ and a correction term given as a power of a quadratic non-residue modulo $p$, where $k_p$ denotes the greatest odd divisor of $p - 1$.

\vs
Although the return values of Tonelli--Shanks algorithm which returns a constant when $n$ is a quadratic non-residue modulo $p$ can never define an element of $\cA$ transcendental over $\Q$, we proved in \cite{Mih26-3} Theorem 4.2 and \cite{Mih26-3} Corollary 4.6 that the two factors, i.e.\ the main term and the correction term, always define elements of $\cA$ transcendental over $\Q$. This means that the correction term can not be ignored for infinitely many $p \in \bP$, and explains the difference of the main term and an actual square root modulo $p$.

\vs
As the example of Fibonacci sequence shows, the property that the return values of (or more generally, intermediate terms appearing in) an arithmetic algorithm can never define an element of $\cA$ transcendental over $\Q$ indicates the possibility to reduce computational complexity when we consider modular arithmetic modulo $p \in \bP$. Therefore, we observe when an arithmetic algorithm returns a value which defines an element of $\cA$ transcendental over $\Q$.

\vs
We briefly explain contents of this paper. In \S \ref{Convention}, we introduce convention in this paper. In \S \ref{Sparse Rounding}, we give a criterion of transcendence in $\cA$ over $\Q$ which we use in this paper. In \S \ref{Montgomery Reduction}, we show Theorem \ref{Montgomery} mentioned above.

\section{Convention}
\label{Convention}

We denote by $\N$ the set of non-negative integers, and by $\bP$ the set of prime numbers. For $p \in \bP$, we denote by $\Fp$ the finite field $\Z/p \Z$. We set
\be
\cA \coloneqq \prod_{p \in \bP} \Fp \bigg/ \bigoplus_{p \in \bP} \Fp.
\ee
For a set $I$, we denote by $\# I$ its cardinality. For sets $X$ and $Y$, we denote by $X^Y$ the set of maps $Y \to X$. When we handle a sequence $s$ indexed by a set $I$, we frequently use the map notation $s(i)$ instead of the subscript notation $s_i$ to point the entry at $i \in I$, in order to avoid massive use of subscripts. For a set $X$, $x \in X$, and a binary relation $R$ on $X$, we set $X_{R x} \coloneqq \set{x' \in X}{x' R x}$. For example, every $d \in \N$ is identical to $\N_{< d} = \set{i \in \N}{i < d}$, and hence for a set $X$, $X^d$ formally means $X^{\N_{< d}}$, which is naturally identified with the set of $d$-tuples in $X$. For $(s,t) \in \R^X \times \R^Y$ with $X,Y \subset \R$, we define
\be
s \in o(t) & \stackrel{\r{def}}{\Leftrightarrow} & \forall c \in \R_{> 0}[\exists x_0 \in X \cap Y[\forall x \in (X \cap Y)_{\geq x_0}[\v{s(x)} \leq ct(x)]]].
\ee
For $a \in \N^{\bP}$, we call
\be
(a(p) + p \Z)_{p \in \bP} + \bigoplus_{p \in \bP} \Fp \in \cA
\ee
{\it the image of $a$ in $\cA$}. We say that $a \in \cA$ is {\it naively transcendental} if there exists no $f \in \Q[x] \setminus \ens{0}$ such that $f(a) = 0$. Here, we formally define $0^0 \coloneqq 1$.

\section{Sparse Rounding}
\label{Sparse Rounding}

Let $S$ be an infinite subset of $\N$. We denote by $e_S \colon \N \to S$ the enumeration function, i.e.\ the function characterised by the recursive relation
\be
e_S(i) \coloneqq \min (S \setminus e_S(\N_{< i})).
\ee
We formally set $e_S(i) \coloneqq e_S(0)$ for $i \in \Z_{< 0}$. We study a rounding of prime numbers into $S$.

\begin{dfn}
Let $k \in \N$. We call $f \in \N^{\bP}$ an {\it $(S,k)$-rounding map} if for any $p \in \bP$, there exists some $i \in \N$ such that $f(p) = e_S(i)$ and $e_S(i-k) \leq p \leq e_S(i+k)$, and denote by $C_{S,k} \subset \N^{\bP}$ the subset of $(S,k)$-rounding maps.
\end{dfn}

As a variant of \cite{MS26} Lemma 2.5 (cf.\ proof of \cite{LW25} Theorem 1), we give a criterion of the naive transcendence of an $(S,k)$-rounding in terms of the growth rate of $e_S$.

\begin{prp}
\label{sparse}
Let $k \in \N$. If $x \log e_S(x) \log e_S(x-k) \in o(e_S(x-k))$, then the image of any $f \in C_{S,k}$ in $\cA$ is naively transcendental.
\end{prp}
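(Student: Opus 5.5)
Suppose for contradiction that some nonzero $g \in \Q[x]$ satisfies $g(f) = 0$ in $\cA$. Clearing denominators, we may take $g \in \Z[x] \setminus \ens{0}$ of degree $d$ with coefficients bounded by some $H$. Then $p \mid g(f(p))$ for all but finitely many $p \in \bP$. Since $S$ is infinite and $g$ has finitely many roots, $g(e_S(i)) \neq 0$ for all large $i$. For such $p$ the integer $g(f(p))$ is nonzero, and
\be
\v{g(f(p))} \leq (d+1) H f(p)^d.
\ee
The plan is to count primes by grouping them according to the value $f(p)$, and to compare two quantities: how many primes are forced to divide a single value $g(e_S(i))$, and how many such values there are.

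Fix a large index $x$ and look at the primes $p \leq e_S(x-k)$, or rather a suitable window of them. For any such $p$ we have $f(p) = e_S(i)$ with $e_S(i-k) \leq p$, and monotonicity of $e_S$ gives $i - k \leq x - k$, so $i \leq x$ (up to the finitely many small indices). Hence every prime $p \leq e_S(x-k)$, except finitely many, divides the nonzero integer
\be
P_x \coloneqq \prod_{i \in \N_{\leq x}, \, g(e_S(i)) \neq 0} g(e_S(i)).
\ee
Taking logarithms gives
\be
\sum_{p \leq e_S(x-k)} \log p \leq \log \v{P_x} \leq (x+1)\bigl(\log((d+1)H) + d \log e_S(x)\bigr) + O(1).
\ee
The left side is $\theta(e_S(x-k))$, which by Chebyshev is $\gg e_S(x-k)$. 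The right side is $O(x \log e_S(x))$. This is already a contradiction under the weaker hypothesis $x \log e_S(x) \in o(e_S(x-k))$, so the stated hypothesis with the extra factor $\log e_S(x-k)$ is more than enough.

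If instead I count primes rather than $\log p$, I get $\pi(e_S(x-k)) \leq \sum_{i \leq x} \omega(g(e_S(i))) \leq (x+1) \cdot O(\log e_S(x))$. Combined with $\pi(y) \gg y/\log y$, this needs exactly
\be
x \log e_S(x) \log e_S(x-k) \in o(e_S(x-k)),
\ee
which matches the hypothesis and is presumably the author's route. Either version works, and I would present the counting one since it matches the statement.

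The main obstacle is bookkeeping rather than substance. Three points need care:
\bi
\item[(a)] The indices $i$ with $f(p) = e_S(i)$ must lie in $\N_{\leq x}$, using $e_S(i-k) \leq p$ and the convention $e_S(i) = e_S(0)$ for negative indices.
\item[(b)] The finitely many exceptional primes and the finitely many $i$ with $g(e_S(i)) = 0$ must be discarded.
\item[(c)] The hypothesis must force $e_S(x-k) \to \infty$, which it does, since $x \in o(e_S(x-k))$.
\ei
Then for large $x$ the inequality $\pi(e_S(x-k)) - O(1) \leq C x \log e_S(x)$ contradicts the hypothesis with $c$ small.
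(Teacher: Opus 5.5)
Your proposal is correct. The counting version you say you would present is essentially the paper's proof. The paper bounds the number of prime divisors of each nonzero value $F(e_S(i'))$, $i_0 \le i' \le i$, by $\log_2|F(e_S(i'))| < (\deg F + 1)\log_2 e_S(i)$. It notes that every prime in $[e_S(i_0+k), e_S(i-k)]$ at which $F(f)$ vanishes must lie in the union of these divisor sets. It then compares that count with $\pi(e_S(i-k))$ using the prime number theorem. The paper argues directly, producing infinitely many $p$ with $F(f(p)) \not\equiv 0 \pmod p$, rather than by contradiction; that difference is immaterial.

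Your $\theta$-variant is a genuinely better argument, and it is also correct. The distinct primes dividing the nonzero integer $P_x$ have product at most $|P_x|$, so you get $\theta(e_S(x-k)) - O(1) \le C x \log e_S(x)$. Since $\theta(y) \gg y$, this already contradicts $x \log e_S(x) \in o(e_S(x-k))$.

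The counting route loses exactly the factor $\log e_S(x-k)$. The bound $\omega(n) \le \log_2|n|$ treats every prime divisor as if it were $2$, whereas almost all the primes being counted have size comparable to $e_S(x-k)$; weighting by $\log p$ recovers this. So the $\theta$ version has two advantages:
\begin{itemize}
\item it proves the proposition under a strictly weaker hypothesis (for instance it covers $e_S(x) \asymp x \log^{3/2} x$, which the stated hypothesis does not);
\item it needs only Chebyshev's lower bound rather than the prime number theorem.
\end{itemize}
For the paper's applications (powers of $B$, perfect powers) both hypotheses hold, so nothing downstream changes.

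One small remark on your point (c): you do not need the hypothesis there. Since $e_S$ is strictly increasing on $\N$, you have $e_S(x) \ge x$, so $e_S(x-k) \to \infty$ automatically.
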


\begin{proof}
Let $F \in \Q[x] \setminus \ens{0}$. We show that the image of $F(f)$ in $\cA$ is not $0$. Multiplying $F$ by the least common multiple of denominators of coefficients of $F$, we may assume $F \in \Z[x] \setminus \ens{0}$. By $F \neq 0$ and $F(x) \in o(x^{\deg(F)+1})$, there exists some $q_0 \in \Z$ such that for any $q \in \Z_{\geq q_0}$,
\be
0 < \v{F(q)} < q^{\deg(F) + 1}
\ee
holds. We denote by $i_0$ the least $i \in \N$ such that $q_0 \leq e_S(i)$ and $1 < e_S(i-k)$. Let $i \in \N_{\geq i_0}$. Set
\be
P_i & \coloneqq & \bigcup_{i'=i_0}^{i} \set{p \in \bP}{F(e_s(i')) \equiv 0 \pmod{p}} \\
P'_i & \coloneqq & \bP \cap [e_S(i_0+k),e_S(i-k)].
\ee
For any $i' \in \N \cap [i_0,i]$, we have $F(e_S(i')) \in \Z \setminus \ens{0}$ by the definition of $q_0$ and $i_0$, and hence
\be
& & \# \set{p \in \bP}{F(e_s(i')) \equiv 0 \pmod{p}} \leq \log_2 F(e_S(i')) \leq \log_2 F(e_S(i)) \\
& < & \log_2 e_S(i)^{\deg(F) + 1} = (\deg(F) + 1) \log_2 e_S(i).
\ee
This implies
\be
\# P_i \leq (i-i_0+1) \times (\deg(F) + 1) \log_2 e_S(i) \leq (\deg(F) + 1)i \log_2 e_S(i).
\ee
For any $p \in P'_i$, we have $f(p) = e_S(i')$ for some $i' \in \N \cap [i_0,i]$ by $f \in C_{S,k}$, and hence $F(f(p)) \equiv 0 \pmod{p}$ is equivalent to $p \in P_i$. We have
\be
& & \# \bP_{\leq e_S(i-k)} \leq \# P'_i + e_S(i_0+k) \leq (\#(P'_I \setminus P_i) + \# P_i) + e_S(i_0+k) \\
& \leq & \#(P'_I \setminus P_i) + (\deg(F) + 1)i \log_2 e_S(i) + e_S(i_0+k),
\ee
and hence
\be
& & \#(P'_i \setminus P_i) \\
& \geq & \# \bP_{\leq e_S(i-k)} - ((\deg(F) + 1)i \log_2 e_S(i) + e_S(i_0+k)) \\
& \geq & \frac{e_S(i-k)}{\log e_S(i-k)} \left( \frac{\# \bP_{\leq e_S(i-k)} \log e_S(i-k)}{e_S(i-k)} - \frac{(\deg(F) + 1)i (\log_2 e_S(i) + e_S(i_0+k)) \log e_S(i-k)}{e_S(i-k)} \right) \\
& \stackrel{i \to \infty}{\longrightarrow} & \infty
\ee
by prime number theorem and $x \log e_S(x) \log e_S(x-k) \in o(e_S(x-k))$. This implies $F(f(p)) \not\equiv 0 \pmod{p}$ for infinitely many $p \in \bP$, i.e.\ the image of $F(f)$ in $\cA$ is not $0$.
\end{proof}

\begin{crl}
\label{floor}
Suppose $x \log e_S(x) \log e_S(x-1) \in o(e_S(x-1))$.
\bi
\item[(i)] If $S_{\leq 2} \neq \emptyset$ (resp.\ $S_{< 2} \neq \emptyset$), then the image of $(\max S_{\leq p})_{p \in P} \in \N^{\bP}$ (resp.\ $(\max S_{< p})_{p \in P} \in \N^{\bP}$) in $\cA$ is naively transcendental.
\item[(ii)] The image of $(\min S_{\geq p})_{p \in \bP} \in \N^{\bP}$ (resp.\ $(\min S_{> p})_{p \in \bP} \in \N^{\bP}$) in $\cA$ is naively transcendental.
\item[(iii)] Let $f \colon \bP \to \N$ be a map assigning to $p \in \bP$ one of the nearest elements of $S$. Then the image of $f$ in $\cA$ is naively transcendental.
\ei
\end{crl}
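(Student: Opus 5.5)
The plan is to reduce all three parts to Proposition \ref{sparse} with $k = 1$. For each part, I will check that the map in question, say $f$, lies in $C_{S,1}$. That means: for every $p \in \bP$ there is an $i \in \N$ with $f(p) = e_S(i)$ and $e_S(i-1) \leq p \leq e_S(i+1)$. The hypothesis $x \log e_S(x) \log e_S(x-1) \in o(e_S(x-1))$ is exactly the growth condition of Proposition \ref{sparse} for $k=1$. So once membership in $C_{S,1}$ is established, naive transcendence follows at once. No analytic work beyond Proposition \ref{sparse} is needed; the content is purely combinatorial bookkeeping with the enumeration $e_S$.

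For (i), the condition $S_{\leq 2} \neq \emptyset$ guarantees that $S_{\leq p}$ is nonempty for every prime $p \geq 2$, so $f(p) = \max S_{\leq p}$ is well defined. It is a finite maximum since $S_{\leq p}$ is finite. Write $f(p) = e_S(i)$. Then $e_S(i-1) \leq e_S(i) \leq p$, and this also holds when $i = 0$ by the convention $e_S(-1) = e_S(0)$. Maximality gives $e_S(i+1) > p$. Hence $f \in C_{S,1}$. The strict variant is identical: $S_{<2} \neq \emptyset$ makes $S_{<p}$ nonempty for all $p$, and one gets $e_S(i) < p < e_S(i+1)$.

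For (ii), $S$ is infinite, so $\min S_{\geq p}$ (resp.\ $\min S_{>p}$) exists. Write it as $e_S(i)$. Then $p \leq e_S(i) \leq e_S(i+1)$. Minimality gives $e_S(i-1) < p$ when $i \geq 1$. When $i = 0$ we get $e_S(-1) = e_S(0) \geq p$, so the required inequality $e_S(i-1) \leq p$ need not hold. This boundary case is the main (though minor) obstacle.

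There are two ways to handle it. One is to note that $i = 0$ can occur only for the finitely many $p \leq e_S(0)$. Modifying $f$ on finitely many primes changes neither its image in $\cA$ nor naive transcendence. So I would redefine $f(p) \coloneqq e_S(0)$ for those $p$; the inequality still fails at $p < e_S(0)$, so I would instead observe that the proof of Proposition \ref{sparse} only uses the rounding condition for primes in $P'_i$, which lie above $e_S(i_0+1)$. The cleanest route is the first: remark that Proposition \ref{sparse} holds verbatim for maps agreeing with an element of $C_{S,k}$ outside a finite set. The image in $\cA$ is unchanged, and one can pick any $g \in C_{S,1}$, for instance $g(p) = \max S_{\leq p}$ or $e_S(0)$ for small $p$, and define the modification as $g$ on the finitely many exceptional primes. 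For (i) with small $p$ the same remark removes any worry as well.

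For (iii), suppose $f(p)$ is a nearest element of $S$ to $p$. Then $f(p)$ is either $\max S_{\leq p}$ (when that exists) or $\min S_{\geq p}$, since any other element of $S$ is farther from $p$ than one of these. Writing $f(p) = e_S(i)$, the verifications from (i) and (ii) give $e_S(i-1) \leq p \leq e_S(i+1)$, except for finitely many small $p$, which are handled by the finite-modification remark above. Hence $f$ agrees with an element of $C_{S,1}$ up to finitely many primes, and Proposition \ref{sparse} gives the conclusion.
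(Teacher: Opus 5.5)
Your approach is the paper's: its proof says only that (i)--(iii) follow from Proposition~\ref{sparse} with $k=1$, without checking membership in $C_{S,1}$. Your checks for (i)--(iii) are right, with one harmless slip. In the strict case of (i), maximality gives $p \le e_S(i+1)$, not $p < e_S(i+1)$, and $p \le e_S(i+1)$ is all you need.

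Your boundary discussion settles on the wrong fix. The boundary problem in (ii) and (iii) arises only when $e_S(0) > 2$, and then $C_{S,1}$ is empty. At $p = 2$ the definition needs some $i$ with $e_S(i-1) \le 2$, but $e_S(j) \ge e_S(0) > 2$ for every $j$. So there is no $g \in C_{S,1}$ to agree with off a finite set. Your suggested $g(p) = e_S(0)$ at small $p$ is exactly the choice you had just shown fails. When $e_S(0) \le 2$, the maps in (ii) and (iii) already lie in $C_{S,1}$ and no patch is needed. So the "agree with an element of $C_{S,1}$" route is either unnecessary or impossible.

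The argument that works is your other remark. The proof of Proposition~\ref{sparse} uses the rounding condition only for $p \in P'_i$, that is, for $p \ge e_S(i_0+k)$. After enlarging $i_0$, it therefore applies verbatim to any $f$ that satisfies the condition at all sufficiently large primes. With that replacement your proof is complete, and it also fills the same edge case that the paper's one-line proof leaves unaddressed.
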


We note that the condition in (i) is just formally given and is not so meaningful, because the value at $p = 2$ does not affect the image in $\cA$.

\begin{proof}
The assertions immediately follows from Proposition \ref{sparse} applied to $k = 1$.
\end{proof}

We give an example of an application of Corollary \ref{floor}, which is essentially the same as \cite{MS26} Example 2.6 given as an example of an application of \cite{MS26} Lemma 2.5:

\begin{exm}
Let $e \in \N_{> 1}$. The image of $(\lfloor \sqrt[e]{p} \rfloor^e)_{p \in \bP} \in \N^{\bP}$ in $\cA$ is naively transcendental by Corollary \ref{floor} (i) applied to the case where $S$ is the set of perfect $e$-th powers, and hence so is that of $(\lfloor \sqrt[e]{p} \rfloor)_{p \in \bP} \in \N^{\bP}$. Similarly, the image of $(\lceil \sqrt[e]{p} \rceil^e)_{p \in \bP} \in \N^{\bP}$ in $\cA$ is naively transcendental by Corollary \ref{floor} (ii), and hence so is that of $(\lceil \sqrt[e]{p} \rceil)_{p \in \bP} \in \N^{\bP}$.
\end{exm}

We give another example, which is not directly derived from \cite{MS26} Lemma 2.5:

\begin{exm}
For $p \in \bP$, we denote by $s_p \in \N$ the least perfect number greater than $p$ (resp.\ the greatest perfect power less than $p$). Then the image of $(s_p)_{p \in \bP} \in \N^{\bP}$ in $\cA$ is naively transcendental. Indeed, let $S \subset \N$ denote the subset of perfect powers. We have
\be
\# S_{\leq x} & \leq & 1 + \sum_{e=2}^{\infty} \# \set{i \in \N_{\leq x}}{\sqrt[e]{i} \in \N_{> 1}} \\
& = & 1 + \sum_{k=1}^{\infty} k \# \set{e \in \N_{> 1}}{\# \set{i \in \N_{\leq x}}{\sqrt[e]{i} \in \N_{> 1}} = k} \\
& = & 1 + \sum_{k=2}^{\infty} (k - 1) \# \set{e \in \N_{> 1}}{\# \set{i \in \N_{\leq x}}{\sqrt[e]{i} \in \N} = k} \\
& = & 1 + \sum_{k=2}^{\infty} \# \set{e \in \N_{> 1}}{\# \set{i \in \N_{\leq x}}{\sqrt[e]{i} \in \N} \geq k} \\
& = & 1 + \sum_{k=2}^{\infty} \# \set{e \in \N_{> 1}}{\sqrt[e]{x} \geq k} \\
& = & 1 + \sum_{k=2}^{\infty} \# \set{e \in \N_{> 1}}{x \geq k^e} \\
& = & 1 + \sum_{k=2}^{\infty} \# \set{e \in \N_{> 1}}{\log_k x \geq e} \\
& \leq & 1 + \sum_{k=2}^{\lfloor \sqrt{x} \rfloor} (\log_k x) - 1 \\
& \leq & 1 + \sqrt{x} \log_2 x \\
& \in & O \left( \sqrt{x} \log x \right),
\ee
and hence
\be
x \in O \left( \sqrt{e_S(x-1)} \log e_S(x-1) \right).
\ee
This implies
\be
\left( \frac{x}{\log e_S(x-1)} \right)^2 \in O(e_S(x-1)).
\ee
and hence $x \log e_S(x) \log e_S(x-1) \in o(e_S(x-1))$. Therefore, the naive transcendence of the image of $(s_p)_{p \in \bP} \in \N^{\bP}$ in $\cA$follows from Corollary \ref{floor} (ii) applied to $S$.
\end{exm}

As an extension of the ceiling in Corollary \ref{floor} (ii), we consider a higher ceiling.

\begin{crl}
\label{k-th least}
Let $k \in \N_{> 0}$. We denote by $f \in \N^{\bP}$ the map assigning to each $p \in \bP$ the $k$-th least element of $S_{> p}$. If $x \log e_S(x) \log e_S(x-k) \in o(e_S(x-k))$, then the image of $f$ in $\cA$ is naively transcendental.
\end{crl}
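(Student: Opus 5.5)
The plan is to reduce the claim to Proposition \ref{sparse} with the same $k$, by showing that $f \in C_{S,k}$. The growth hypothesis is exactly the hypothesis of Proposition \ref{sparse} for this $k$. So it remains to check that for every $p \in \bP$ there is an $i \in \N$ with $f(p) = e_S(i)$ and $e_S(i-k) \leq p \leq e_S(i+k)$.

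Fix $p \in \bP$. Since $S$ is infinite, $S_{> p}$ is infinite, so its $k$-th least element exists and $f$ is well defined. Let $j \in \N$ be the least index with $e_S(j) > p$; then $S_{> p} = \ens{e_S(j), e_S(j+1), \ldots}$. The $k$-th least element of $S_{> p}$ is therefore $e_S(j+k-1)$, and we set $i \coloneqq j+k-1$.

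The upper bound is immediate: $p < e_S(j) = e_S(i-k+1) \leq e_S(i) \leq e_S(i+k)$, using that $e_S$ is increasing on $\N$.

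For the lower bound we need $e_S(i-k) = e_S(j-1) \leq p$.
\bi
\item[$\bullet$] If $j \geq 1$, then $e_S(j-1) \leq p$ by the minimality of $j$.
\item[$\bullet$] If $j = 0$, then $i - k = -1$, and by the convention $e_S(-1) \coloneqq e_S(0)$ we would need $e_S(0) \leq p$. This fails, since here $e_S(0) > p$.
\ei

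The case $j = 0$ is the only real obstacle. It occurs only for the finitely many primes $p < \min S$, so $f$ differs from an element of $C_{S,k}$ only at finitely many primes. To handle this, define $\tilde f \in \N^{\bP}$ to agree with $f$ at all $p \geq \min S$, and at the finitely many primes $p < \min S$ to be any value making $\tilde f \in C_{S,k}$. For $k \geq 1$, one valid choice is $\tilde f(p) \coloneqq e_S(k)$ with $i = k$: then $e_S(0) \leq e_S(k)$, and we need $e_S(0) \leq p \leq e_S(2k)$. The left inequality can fail, so a better choice is needed.

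The cleanest fix is to re-run the proof of Proposition \ref{sparse} directly. That proof only uses the rounding property for primes in $P'_i = \bP \cap [e_S(i_0+k), e_S(i-k)]$, and all such primes exceed $\min S$. So the argument goes through verbatim for any $f$ that satisfies the rounding condition at all but finitely many primes.

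Alternatively, observe that changing $f$ at finitely many primes does not change its image in $\cA$. Naive transcendence therefore depends only on that image, and it suffices that the restriction of $f$ to large primes satisfies the $(S,k)$-rounding condition.

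Either way, Proposition \ref{sparse} yields the naive transcendence of the image of $f$. The remaining verification is only the monotonicity of $e_S$ and the identification of the $k$-th least element of $S_{> p}$ as $e_S(j+k-1)$.
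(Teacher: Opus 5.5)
Your argument is the paper's own: the paper's whole proof is the assertion that $f$ is an $(S,k)$-rounding map ``by definition'', followed by Proposition \ref{sparse}. Your identification $f(p) = e_S(j+k-1)$ with $e_S(j-1) \leq p < e_S(j)$ is exactly the check the paper leaves implicit.

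You are right, and more careful than the paper, to notice that this check fails for primes $p < \min S$, because of the convention $e_S(i) = e_S(0)$ for $i < 0$. Your first fix is the correct one. The proof of Proposition \ref{sparse} uses the rounding property only for $p \in P'_i \subset \bP_{\geq e_S(i_0+k)}$, and every such $p$ satisfies $p \geq \min S$.

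Your second route does not stand on its own, however. If some prime $p$ satisfies $p < \min S$, then $e_S(i-k) \geq e_S(0) > p$ for every $i \in \N$. So $C_{S,k}$ is empty, and no finite modification $\tilde f$ of $f$ can lie in it. Drop the search for a ``better choice'' of $\tilde f(p)$, since none exists. Let the conclusion rest only on re-running the proof of Proposition \ref{sparse}, i.e.\ on the fact that the proposition holds for any $f$ satisfying the rounding condition at all sufficiently large primes.
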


\begin{proof}
By definition, $f$ is an $(S,k)$-rounding map. Therefore, The assertions immediately follows from Proposition \ref{sparse}.
\end{proof}

For a map $T \colon \N \to \N$, we denote by $T^k \colon \N \to \N$ the $k$-th iteration of $T$ for $k \in \N$. We also give a criterion of naive transcendence given by iteration of shifting. We note that $f$ in Corollary \ref{k-th least} coincides with $T^k |_{\bP}$ for the map
\be
T \colon \N & \to & \N \\
i & \mapsto & \min S_{> i}
\ee
whose restriction to $\bP$ is the map $(\min S_{> p})_{p \in \bP}$ in Corollary \ref{floor} (ii). In this sense, Corollary \ref{k-th least} gives a criterion of naive transcendence for iteration of a map.

\begin{exm}
Let $e \in \N_{> 1}$. We define a map $T \colon \N \to \N$ by
\be
i \mapsto \lfloor \sqrt[e]{i} \rfloor^e + 1.
\ee
For any $k \in \N_{> 0}$, the image of $T^k |_{\bP} \in \N^{\bP}$ in $\cA$ is naively transcendental. Indeed, $T^k - 1$ coincides with $f$ in Corollary \ref{k-th least} applied to the case where $S$ is the set of perfect $e$-th powers.
\end{exm}

\section{Montgomery Reduction}
\label{Montgomery Reduction}

Let $(N,R) \in \N_{> 0}^2$ with $N < R$ and $\gcd(N,R) = 1$. We denote by $\REDC_{N,R}$ the Montgomery reduction for $(N,R)$, i.e.\ the map
\be
\Z/N \Z & \to & \Z/N \Z \\
i + N \Z & \mapsto & iR' + N \Z,
\ee
where $R' \in \N_{< N}$ denotes the modular inverse of $R$ modulo $N$. In particular, $R' + N \Z = \REDC_{N,R}(1 + N \Z)$ represents $\REDC_{N,E}$ regarded as a $\Z/N \Z$-linear homomorphism.

\vs
We note that the assumption $N < R$ is used for the original implementation of $\REDC_{N,R}$, and we do not need it when we only observe its mathematical nature. Practically speaking, $N$ is assumed to be an odd number and $R$ is chosen to be a power of $2$ greater than $N$ but is bounded by a machine word size. Typically, $N$ is an odd number expressed by $32$ bits, and $R$ is $2^{32}$.

\vs
In this section, we consider Montgomery reduction for infinitely large primes. Let $(k,B,e) \in \N_{> 0} \times \N_{> 1} \times \N_{> 0}$ with $B^{e(p)-k} \leq p < B^{e(p)}$ for any $p \in \bP_{> B}$. We define a map $R \colon \bP \to \N$ by 
\be
R(p) \coloneqq B^{e(p)}.
\ee
For any $p \in \bP_{> B}$, $\REDC_{p,R(p)}$ makes sense by $p < R(p)$ and $\gcd(p,R(p)) = 1$.

\begin{thm}
\label{Montgomery}
For any $f \in \N^{\bP}$, if $f(p) + p \Z$ represents $\REDC_{p,R(p)}$ regarded as an $\Fp$-linear homomorphism for all but finitely many $p \in \bP_{> B}$, then the image of $f$ in $\cA$ is naively transcendental.
\end{thm}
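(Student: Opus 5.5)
The plan is to reduce the statement to Proposition \ref{sparse}, applied to the map $R$ itself rather than to $f$, using the fact that $f$ and $R$ are mutually inverse in $\cA$.

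\textbf{Step 1: from $f$ to $R$.} By hypothesis, for all but finitely many $p \in \bP_{> B}$ we have $f(p) \equiv R'(p) \pmod{p}$, so $f(p)R(p) \equiv 1 \pmod{p}$. Hence, writing $\bar f, \bar R \in \cA$ for the images, $\bar f \bar R = 1$ in $\cA$, and both are units.

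Suppose $F \in \Q[x] \setminus \ens{0}$ satisfies $F(\bar f) = 0$. Write $d \coloneqq \deg F$ and $G(x) \coloneqq x^d F(x^{-1})$. Then $G \in \Q[x] \setminus \ens{0}$, since its constant term is the leading coefficient of $F$. Moreover $G(\bar R) = \bar R^d F(\bar f) = 0$. So it suffices to show that $\bar R$ is naively transcendental.

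\textbf{Step 2: $R$ is essentially a rounding map.} Take $S \coloneqq \set{B^i}{i \in \N}$, so that $e_S(i) = B^i$ for $i \in \N$, and $e_S(i) = 1$ for $i < 0$ by the paper's convention. For $p \in \bP_{> B}$ we have $R(p) = e_S(e(p))$. The hypothesis gives
\be
e_S(e(p)-k) \leq B^{e(p)-k} \leq p < B^{e(p)} \leq e_S(e(p)+k).
\ee
The map $R$ may fail the rounding condition at the finitely many primes $p \leq B$, so define $\tilde R \in \N^{\bP}$ by $\tilde R(p) \coloneqq R(p)$ for $p > B$ and $\tilde R(p) \coloneqq e_S(0) = 1$ for $p \leq B$. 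This choice is admissible with $i = 0$ because $e_S(-k) = 1 \leq p \leq B \leq B^k = e_S(k)$. Thus $\tilde R \in C_{S,k}$, and $\tilde R$ has the same image in $\cA$ as $R$, since the two differ at only finitely many primes.

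\textbf{Step 3: the growth condition.} We need $x \log e_S(x) \log e_S(x-k) \in o(e_S(x-k))$. For $x \geq k$ the left side equals $x \cdot x \log B \cdot (x-k)\log B$, which is $O(x^3)$. The right side is $B^{x-k}$ with $B \geq 2$, which grows exponentially. So the condition holds, and Proposition \ref{sparse} gives the naive transcendence of $\bar R$. By Step 1, $\bar f$ is then naively transcendental.

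\textbf{Expected obstacle.} The argument is essentially bookkeeping. The only points needing care are the polynomial reversal in Step 1, which must use that $\bar R$ is a unit in $\cA$ so that $G(\bar R) = \bar R^d F(\bar f)$ is legitimate, and the handling in Step 2 of the finitely many small primes and of the negative-index convention for $e_S$.
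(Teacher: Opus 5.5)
Your proposal is correct and follows the paper's route. The paper likewise proves in Lemma~\ref{Montgomery expression} that the image of $R$ is naively transcendental via Proposition~\ref{sparse} with $S$ the powers of $B$, and then concludes because the image of $f$ is the inverse of that of $R$ in $\cA$; your polynomial reversal, the patching of $R$ at $p \leq B$, and the explicit growth check fill in details the paper leaves implicit, with one tiny slip: when $e(p) < k$ the inequality $e_S(e(p)-k) \leq B^{e(p)-k}$ in your chain is false (the left side is $1$), although the inequality you actually need, $e_S(e(p)-k) \leq p$, still holds trivially.
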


In order to prove Theorem \ref{Montgomery}, we prepare a lemma.

\begin{lmm}
\label{Montgomery expression}
The image of $R$ in $\cA$ is naively transcendental.
\end{lmm}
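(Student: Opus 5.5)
We will deduce the lemma from Proposition \ref{sparse}, taking $S \coloneqq \set{B^n}{n \in \N}$, so that $e_S(i) = B^i$ for $i \in \N$ and $e_S(i) = 1$ for $i \in \Z_{< 0}$. The plan has two steps: show that $R$ is (up to finitely many primes) an $(S,k)$-rounding map, and check the growth hypothesis of Proposition \ref{sparse} for this $S$.

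\emph{Rounding property.} For $p \in \bP_{> B}$ we have $R(p) = B^{e(p)} = e_S(e(p))$. The hypothesis $B^{e(p)-k} \leq p < B^{e(p)}$ gives $e_S(e(p)-k) \leq p \leq e_S(e(p)+k)$; when $e(p) - k < 0$ the left inequality still holds, since then $e_S(e(p)-k) = 1 \leq p$. So the defining condition of an $(S,k)$-rounding map holds at every $p \in \bP_{> B}$ with $i = e(p)$. The finitely many primes $p \leq B$ do not affect the image in $\cA$. We therefore define $\tilde{R} \in \N^{\bP}$ by $\tilde{R}(p) \coloneqq R(p)$ for $p > B$, and $\tilde{R}(p) \coloneqq e_S(i)$ for $p \leq B$, where $i$ is any index with $e_S(i-k) \leq p \leq e_S(i+k)$ (for instance $i = 1$, which works because $p \leq B \leq B^{1+k}$). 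Then $\tilde{R} \in C_{S,k}$, and $\tilde{R}$ has the same image in $\cA$ as $R$.

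\emph{Growth condition.} We must show $x \log e_S(x) \log e_S(x-k) \in o(e_S(x-k))$. For $x \geq k$ the left side equals $x \cdot x \log B \cdot (x-k)\log B$, a cubic polynomial in $x$, while the right side is $B^{x-k}$ with $B \geq 2$. Exponential growth dominates any polynomial, so the relation holds in the sense of the paper's definition, which only concerns sufficiently large $x$.

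Proposition \ref{sparse} then gives that the image of $\tilde{R}$, and hence of $R$, in $\cA$ is naively transcendental. The only point needing care is the bookkeeping in the first step, namely the negative-index convention $e_S(i) = e_S(0)$ for $i < 0$ and the modification at the finitely many small primes. Neither is a genuine obstacle, so I expect the argument to be routine once $S$ is chosen to be the set of powers of $B$.
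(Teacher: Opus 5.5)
Your proposal is correct and is the paper's own argument: the paper proves the lemma in one line by applying Proposition \ref{sparse} with $S$ the set of powers of $B$. Your check that $R$ is an $(S,k)$-rounding map, including the redefinition at the finitely many primes $p \leq B$ so that $\tilde{R} \in C_{S,k}$ holds literally, and your comparison of the cubic $x^2(x-k)(\log B)^2$ with $B^{x-k}$ fill in details the paper leaves implicit.
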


\begin{proof}
The assertion immediately follows from Proposition \ref{sparse} applied to the case where $S$ is the set of powers of $B$.
\end{proof}

\begin{proof}[Proof of Theorem \ref{Montgomery}]
The image of $fR$ in $\cA$ is $1$. Therefore, the image of $f$ in $\cA$ is an inverse of that of $R$, which is naively transcendental by Lemma \ref{Montgomery expression}, and hence is naively transcendental.
\end{proof}

For $n \in \N_{> 0}$, we define {\it the digit flip of $n$ in base $B$} as
\be
\sum_{d=0}^{D} (B - 1 - a(d)) B^d
\ee
when $n$ is expressed as $\sum_{d=0}^{D} a(d) B^d$ with $D \in \N$ and $a \in \N^{D+1}$ satisfying $a(D) \neq 0$, and denote it by $\DF_B(n)$. We obtain the naive transcendence of digit flips of prime numbers.

\begin{crl}
\label{digit flip}
The image of $(\DF_B(p))_{p \in \bP}$ in $\cA$ is naively transcendental.
\end{crl}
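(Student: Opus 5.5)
The plan is to express the digit flip through a power of $B$ and then use Lemma \ref{Montgomery expression} (equivalently Proposition \ref{sparse}). For $n \in \N_{> 0}$ with base-$B$ expansion $n = \sum_{d=0}^{D} a(d) B^d$ and $a(D) \neq 0$, we have
\be
\DF_B(n) = \sum_{d=0}^{D} (B-1)B^d - \sum_{d=0}^{D} a(d) B^d = B^{D+1} - 1 - n.
\ee
Here $D+1$ is the number of base-$B$ digits of $n$. For a prime $p$ we set $e(p) \coloneqq D+1$, so that $B^{e(p)-1} \leq p < B^{e(p)}$. Thus $\DF_B(p) = R(p) - 1 - p$, where $R(p) = B^{e(p)}$ is the map of this section with $k = 1$; the hypothesis $B^{e(p)-k} \leq p < B^{e(p)}$ holds for every $p$, not only for $p > B$.

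Reducing modulo $p$ gives $\DF_B(p) \equiv R(p) - 1 \pmod{p}$ for every $p \in \bP$. So the image of $(\DF_B(p))_{p \in \bP}$ in $\cA$ equals $r - 1$, where $r$ is the image of $R$ in $\cA$.

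By Lemma \ref{Montgomery expression}, applied with $k = 1$, the element $r$ is naively transcendental. Suppose $F \in \Q[x] \setminus \ens{0}$ satisfied $F(r-1) = 0$. Then $G(x) \coloneqq F(x-1)$ is a nonzero polynomial in $\Q[x]$ with $G(r) = 0$, which is a contradiction. Hence $r - 1$ is naively transcendental, and the corollary follows.

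There is no serious obstacle. The only point to check is that Lemma \ref{Montgomery expression} really applies, i.e. that the growth hypothesis of Proposition \ref{sparse} holds for $S = \set{B^i}{i \in \N}$ and $k = 1$. Here $e_S(x) = B^x$, so $x \log e_S(x) \log e_S(x-1)$ is $O(x^3)$, which lies in $o(B^{x-1})$. One also checks that $R$ is an $(S,1)$-rounding map: if $R(p) = e_S(i)$ with $i = e(p)$, then $e_S(i-1) \leq p \leq e_S(i+1)$.
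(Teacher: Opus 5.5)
Your proposal is correct and follows the paper's proof. Both write $\DF_B(p) = B^{D+1} - 1 - p$, so the image of $(\DF_B(p))_{p \in \bP}$ in $\cA$ is the image of $R$ (with $k = 1$) shifted by $-1$, and both conclude from Lemma \ref{Montgomery expression}. You also spell out two steps the paper leaves implicit: that translation preserves naive transcendence, and that $R$ is an $(S,1)$-rounding map for every prime and satisfies the growth hypothesis of Proposition \ref{sparse}.
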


\begin{proof}
For any $p \in \bP$, we have
\be
DF_B(P) = B^{D+1} - p - 1,
\ee
where $D$ denotes the greatest $d \in \N$ with $B^d \leq p$. This implies that $(\log_B(DF_B(p) + p + 1))_{p \in \bP}$ satisfies the condition of $e$ in Theorem \ref{Montgomery} with $k = 1$, and hence the image of $(DF_B(p) + p + 1)_{p \in \bP}$ in $\cA$ is naively transcendental by Lemma \ref{Montgomery expression}. This implies that the image of $(\DF_B(p))_{p \in \bP}$ in $\cA$ is naively transcendental.
\end{proof}

In particular, Corollary \ref{digit flip} applied to the case $B = 2$ implies the naive transcendence of bit flips of prime numbers (where we do not consider leading zeros of binary expressions).

\vspace{0.3in}
\addcontentsline{toc}{section}{Acknowledgements}
\noindent {\Large \bf Acknowledgements}
\vspace{0.2in}

\noindent
I thank all people who helped me to learn mathematics and programming. I also thank my family.

\end{document}